\newtheorem{theor}{Theorem}[section]
\newtheorem{corol}[theor]{Corollary}
\newtheorem{lem}[theor]{Lemma}
\newcommand{\tyz}{Tian-Yau-Zelditch }
\newcommand{\Z}{\mathbb{Z}}
\newcommand{\CP}{\mathds{C}\mathrm{P}}
\newcommand{\de}{\partial}
\newcommand{\R}{\mathbb{R}}
\newcommand{\B}{\mathcal{B}}
\newcommand{\C}{\mathbb{C}}
\newcommand{\N}{\mathbb{N}}
\newcommand{\K}{K\"{a}hler\ }
\renewcommand{\div}{\textrm{div}}
\newcommand{\Ric}{\textrm{Ric}}
\begin{document}
\title[On the third coefficient of TYZ expansion for radial metrics]{On the third coefficient of TYZ expansion for radial scalar flat metrics}

\author{Andrea Loi, Filippo Salis, Fabio Zuddas}
\address{Dipartimento di Matematica e Informatica, Universit\`a di Cagliari\\Via Ospedale 72, 09124 Cagliari (Italy)}
\email{loi@unica.it, filippo.salis@gmail.com, fabio.zuddas@unica.it}

\thanks{The first author was  supported by Prin 2015 -- Real and Complex Manifolds; Geometry, Topology and Harmonic Analysis -- Italy  by INdAM. GNSAGA - Gruppo Nazionale per le Strutture Algebriche, Geometriche e le loro Applicazioni and by Fondazione di Sardegna and Regione Autonoma della Sardegna.}
\subjclass[2010]{53C55; 58C25;  58F06} 
\keywords{K\"ahler manifolds; TYZ asymptotic expansion; projectively induced metrics}

\begin{abstract}
We classify radial scalar flat metrics with constant  third coeffcient of its TYZ expansion.  
As a byproduct of our analysis we provide a characterization of  Simanca's scalar flat metric. 
\end{abstract}
 
\maketitle
\tableofcontents

\section{Introduction}
A K\"ahler metric $g$ on a complex manifold $M$ is said to be \emph{projectively induced} if there exists a K\"ahler (isometric and holomorphic) immersion of $(M, g)$ into the complex projective space $(\C P^N, g_{FS})$, $N \leq +\infty$,  endowed with the Fubini--Study metric $g_{FS}$, namely the metric whose associated \K form is given in homogeneous coordinates  by  $\omega_{FS}=\frac{i}{2}\partial\bar\partial\log (|Z_0|^2+\cdots +|Z_N|^2)$. Since requirement that a \K metric is projectively induced is a somehow strong assumption, one could try to approximate an integral  \K\ form
on a  complex manifold with suitable normalized  projectively induced \K\ forms through the following construction.

Assume that there exists a Hermitian line bundle $(L,h)$ over  $M$ whose Ricci curvature $\Ric (h)$ equals $\omega$ (this is always possible in the compact case).
For every postive integer $m$  one considers the holomorphic line bundle $L^m=L^{\otimes m}$ endowed with the Hermitian metric $h_m$ induced on $L^m$ by $h$, such
that $\Ric (h_m)=m\omega$.
Let $\mathscr{H}_m$ be the separable complex Hilbert space consisting of global holomorphic sections  of $L^m$ such that
$$\langle s,s \rangle_m=\int_M h_m(s,s)\frac{\omega^n}{n!}<\infty.$$
If $s_j$, $j=0,\mathellipsis , N_{m}$,  $N_{m}+1=\dim\mathscr{H}_m\leq\infty$ is an orthonormal basis of $\mathscr{H}_m$, the smooth function
$$\epsilon_{mg}(x) =\sum_{j=0}^{N_m} h_m(s_j(x),s_j(x))$$
is globally defined on $M$ and, as suggest by the notation, it does not depend on the orthonormal basis or the Hermitian metric chosen (see e.g. \cite{cgr1}).
This function, also known in literature as \emph{distortion function}, constitutes a tool to evaluate how a \K metric differs from being projectively induced. Indeed, if there exists a sufficiently large integer $m$ such that, for every point of $M$, we can find at least one element of an orthonormal basis $\{s_j\}_{j=0,\mathellipsis ,N_m}$ of $\mathscr{H}_m$ which does not vanishes at this point (the free based point condition in the compact case),  \emph{coherent state map}, namely the holomorphic map 
\begin{align*} 
\varphi_m:  M & \to \CP^{N_m} \\
x &\mapsto [s_0(x), \mathellipsis, s_{N_m}(x) ],
\end{align*}
is well-defined and, moreover (see, e.g. \cite{arezzoloi} for a proof), it satisfies
$$\varphi_m^*\omega_{FS}= m\omega+\frac{i}{2}\partial\bar\partial\log\epsilon_{m g}.$$
Although not all \K metrics are projectively induced, Tian (\cite{tian4}) and Ruan (\cite{ruan}) solved  a conjectured by Yau by proving that  any polarized metric on a compact complex manifold is the $C^\infty$-limit of normalized projectively induced metrics $\frac{\varphi_m^*g_{FS}}{m}$. Then, Catlin (\cite{catlin}) and Zelditch (\cite{zelditch}) independently generalized the Tian-Ruan theorem by proving the existence of a complete asymptotic expansion for the distortion function related to any polarized metric defined on compact complex manifold.
This  asymptotic expansion  
\begin{equation}\label{TYZ}
\epsilon_{m g}(x)\sim \sum_{j=0}^\infty a_j(x)m^{n-j},
\end{equation}
is called {\em Tian--Yau--Zelditch expansion} and it means that there exists a positive constant $C_{l,r}$ depending on  two positive integer constants $l$ and $r$ such that
$$\Big\Vert\epsilon_{m g}(x) -\sum_{j=0}^l a_j(x)m^{n-j}\Big\Vert_{C^r}\leq\frac{C_{l,r}}{m^{l+1}},$$
where $a_0(x)=1$ and $a_j(x)$, $j=1,\dots$ are smooth functions on $M$.
In particular, Z. Lu \cite{lu}  computed the first three TYZ coefficients.
The expression of the first two coefficients  are (for the rather involved expression of third coefficient $a_3$ see (\ref{coeffa3}) below):
 \begin{equation}\label{coefflu}
\left\{\begin{array}
{l}
a_1(x)=\frac{1}{2}\rho\\
a_2(x)=\frac{1}{3}\Delta\rho
+\frac{1}{24}(|R|^2-4|{\rm Ric} |^2+3\rho ^2)\\
\end{array}\right.
\end{equation}
where 
$\rho$, $R$, $\Ric$ denote respectively the scalar curvature,
the curvature tensor and the Ricci tensor of $(M, g)$,

Due to Donaldson's work (cf. \cite{donaldson}, \cite{do2} and \cite{arezzoloi}) in the compact case and respectively to the theory of quantization  in the noncompact case (see e.g. \cite{Ber1}, \cite{cgr3} and \cite{cgr4}), it is  natural to study metrics with the \tyz coefficients being prescribed. For instance, the vanishing of this coefficients for large enough indexes  turns out to be related to some important problems in the theory of psedoconvex manifolds (cf. \cite{lutian},  \cite{LoiArezzo}). 
Furthermore, in  the noncompact case,   one can find  in \cite{taubnut} a characterization of the flat metric as a Taub-NUT metric with $a_3=0$, while Z. Feng and Z. Tu \cite{fengtu} solve a conjecture formulated in \cite{zedda} by showing that  the complex hyperbolic space is the only Cartan-Hartogs domain where the coefficient  $a_2$ is constant. In \cite{LZhs} A. Loi and M. Zedda prove that a locally hermitian symmetric space with vanishing $a_1$ and $a_2$ is flat. 

The present paper deals with {\it radial}  K\"ahler metrics, namely those \K\ metrics
admitting  a  K\"ahler potential  which depends only on the sum $|z|^2 = |z_1|^2 + \cdots + |z_n|^2$ of the moduli of a local coordinates' system $z_i$. 

Our main results are the following
 
\begin{theor}\label{mainteor}
The third  \tyz coefficient $a_3$ of a radial \K metric with constant scalar curvature (cscK metrc) is constant if and only if the second  \tyz coefficient $a_2$ is constant.  
\end{theor}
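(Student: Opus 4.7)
The approach is to exploit the radial symmetry to reduce every curvature scalar to a function of one real variable, then to use the cscK equation as an ODE to eliminate high--order derivatives, and finally to show that, after these reductions, both $a_2$ and $a_3$ are expressible in terms of the same master radial invariant; consequently their constancy conditions coincide.

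First I would write the potential as $\Phi(r)$ with $r=|z_1|^2+\cdots+|z_n|^2$, so that $g_{i\bar j}=\Phi'\delta_{ij}+\Phi''\bar z_iz_j$. At a point with $r>0$ the eigenvalues of $(g_{i\bar j})$ are $\Phi'(r)$, with multiplicity $n-1$, and $y'(r)$, with multiplicity one, where $y(r)=r\Phi'(r)$ is the standard moment coordinate, and the volume is $(\Phi')^{n-1}y'$. By $\U(n)$--invariance, the scalars $\rho$, $|R|^2$, $|\Ric|^2$ together with the covariant derivatives appearing in Lu's formulas are radial, and in the $y$ (or $t=\log r$) variable they become algebraic in $\Phi$ and its derivatives.

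Imposing $\rho=c$ then yields an ODE for $\Phi$ that can be integrated once, producing a first integral expressing the highest derivatives of $\Phi$ in terms of lower--order ones and $c$. Plugging this into (\ref{coefflu}) with $\Delta\rho=0$, one rewrites $24 a_2=|R|^2-4|\Ric|^2+3c^2$ as an algebraic function $\Psi_2(Q)$ of a single scalar radial invariant $Q=Q(r)$, depending also on $c$; a direct check shows that $\Psi_2$ is injective for fixed $c$, so that $a_2$ is constant if and only if $Q$ is constant. Turning to Lu's formula (\ref{coeffa3}) for $a_3$, the $\Delta\rho$ and $\Delta^2\rho$ terms vanish under cscK, while the remaining terms $\Delta|R|^2$, $\Delta|\Ric|^2$ and the various curvature contractions are simplified again using the first integral of the cscK equation. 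One then verifies that the $Q'$ contributions cancel, so $a_3$ reduces to a function $\Psi_3(Q)$, also injective in $Q$. Combining the two equivalences yields $a_2$ constant $\Leftrightarrow$ $Q$ constant $\Leftrightarrow$ $a_3$ constant.

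The main difficulty is the $a_3$ computation: expanded in the radial variable, the terms $\Delta|R|^2$ and $\Delta|\Ric|^2$ a priori involve both $Q$ and $Q'$, and the cancellation of the $Q'$ contributions relies on a careful use of the cscK first integral. Once this reduction is accomplished, the conclusion follows from the injectivity of $\Psi_2$ and $\Psi_3$ in $Q$, and the advertised byproduct characterization of Simanca's scalar flat metric amounts to identifying the radial scalar flat solutions for which $Q$ is constant.
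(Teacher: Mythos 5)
Your setup (reduction to the radial variable, eigenvalues $\Phi'$ and $y'$ of the metric, constancy of $\rho$ as an ODE) matches the paper's, but the heart of your argument --- that after using the cscK equation both $a_2$ and $a_3$ become \emph{injective} functions $\Psi_2(Q)$, $\Psi_3(Q)$ of one and the same radial invariant $Q$, with all the $Q'$ contributions from $\Delta|R|^2$ and $\Delta|\Ric|^2$ cancelling --- is asserted rather than proved, and it is precisely where the entire content of the theorem lies. Worse, the claimed structure appears not to hold in any universal sense. The paper integrates the cscK ODE completely, obtaining $\psi(y)=Ay^2+y+By^{2-n}+Cy^{1-n}$, and then computes $a_2$ and $a_3$ on this family as explicit Laurent polynomials in $y$: the quantity $|R|^2-4|\Ric|^2$ (the natural candidate for your $Q$, since $24a_2=|R|^2-4|\Ric|^2+3\rho^2$ under cscK) only involves the powers $y^{-2n},y^{-2n-1},y^{-2n-2}$ and a constant, whereas $a_3$ additionally involves $y^{-3n},\dots,y^{-3n-3}$ with coefficients depending on $(A,B,C)$ in a different way. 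For instance with $C=0$, $B\neq 0$, $n=3$ one finds $a_2=c_0+c_1B^2y^{-6}$ while $a_3$ contains independent terms in $y^{-6}$, $y^{-7}$ and $y^{-9}$; no solution-independent injective $\Psi_3$ can relate these, so the equivalence cannot be read off from a pointwise functional dependence on a single invariant.

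What the paper actually does at this step is a direct computation: it evaluates every term of Lu's formula for $a_3$ (the contractions $\Ric(R,R)$, $R(\Ric,\Ric)$, $\sigma_3(\Ric)$, $|D'\Ric|^2$, the double divergences and the Laplacians) at a point $(z_1,0,\dots,0)$ in terms of $y$, $\psi$ and its derivatives, substitutes the explicit three-parameter cscK solution, and reads off from the resulting polynomial identity that $a_3$ is constant exactly when $C=0$ (for $n=2$), $B=C=0$ (for $n>2$), and always for $n=1$ --- which are verbatim the conditions under which $a_2$ is constant in Feng's classification. If you want to salvage your approach, you would have to either carry out that computation anyway (in which case the ``master invariant'' adds nothing) or exhibit $Q$ explicitly and verify the two injectivity claims on the full $(A,B,C)$-family, which the exponent count above shows is not possible for the natural candidates.
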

\begin{corol}\label{mainteor2}
The flat metric and the Simanca metric are the only radial projectively induced metrics with $a_1 = a_3 = 0$.
\end{corol}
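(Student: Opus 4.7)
The plan is to reduce the Corollary to Theorem~\ref{mainteor} together with the classification of radial scalar flat \K potentials with constant $a_2$.

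From the expression $a_1=\tfrac12\rho$ in \eqref{coefflu}, the hypothesis $a_1=0$ forces $\rho\equiv 0$, so $g$ is trivially cscK. Theorem~\ref{mainteor} then applies: since $a_3=0$ is a constant value, the coefficient $a_2$ must also be constant. It therefore suffices to show that the only radial projectively induced \K metrics with vanishing scalar curvature and constant $a_2$ are the flat metric and the Simanca metric.

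To handle this reduced problem, I would write a radial \K potential as $\Phi(z)=\phi(|z|^2)$ and translate both conditions (scalar flatness and $a_2$ constant) into ODEs in $\phi$. The condition $\rho=0$ is the standard second order Calabi equation for radial potentials, whose general solution is an explicit two parameter family. With $\rho=0$, the second line of \eqref{coefflu} reduces to $a_2=\tfrac{1}{24}(|R|^2-4|\Ric|^2)$, a rational expression in $\phi$ and its derivatives; imposing it to be constant cuts the two parameter scalar flat family down to a short list of candidate potentials. This is precisely the explicit radial analysis already needed to establish Theorem~\ref{mainteor}, so I would simply invoke it here.

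To finish, I would impose the projectively induced hypothesis via Calabi's diastasis criterion: $(M,g)$ is projectively induced if and only if, near a suitable base point, $e^{D}-1$ admits an expansion as a sum of squared moduli of holomorphic functions vanishing there, equivalently the matrix of Taylor coefficients of the Calabi diastasis is positive semidefinite. In the radial case $D(z,0)=\phi(|z|^2)-\phi(0)$, so the criterion reduces to a concrete positivity condition on the Taylor coefficients of $\phi$ at $0$. The flat potential $\phi(r)=r$ and Simanca's potential both pass this test by well known direct computations, so I would rule out all the other candidates in the short list by exhibiting a negative entry of the Calabi matrix at some finite order.

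The main obstacle, in my view, is precisely this last step: one needs the short list obtained after imposing $\rho=0$ and $a_2$ constant to be sharp enough that the Calabi obstruction can be exhibited explicitly for every non admissible parameter value. Since the short list is essentially one parameter, the positivity check in Calabi's criterion should reduce to a degree by degree computation, with a definite sign appearing at some low order for every parameter outside the flat and Simanca values.
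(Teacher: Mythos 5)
Your reduction is the same as the paper's: $a_1=0$ gives $\rho\equiv 0$, Theorem \ref{mainteor} upgrades $a_3=0$ to $a_2$ constant, and the classification of radial cscK potentials with constant $a_2$ (Theorem \ref{radiala2}, the case $\Phi''=b\Phi'+c$) leaves exactly three candidates up to homothety: the flat metric, the Simanca potential $|z|^2+\log|z|^2$, and the potential $|z|^2-\log|z|^2$ on $\{|z|^2>1\}$. The genuine gap is in your final step. You propose to test projective inducibility via the positivity of the Taylor coefficients of the diastasis \emph{at the origin}, writing $D(z,0)=\phi(|z|^2)-\phi(0)$. But none of the non-flat candidates is defined at the origin: the Simanca metric lives on $\C^2\setminus\{0\}$ and the third candidate on the complement of a closed ball, so there is no expansion of $\phi$ at $0$ to speak of, and the criterion as you state it cannot even be set up for the one metric you must exclude. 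The paper's Lemma \ref{projind} is precisely the replacement: a necessary condition anchored at a point $(s,0,\dots,0)$ with $s\neq 0$, phrased through the functions $g_h(r)=e^{-f(r)}\frac{d^h e^{f(r)}}{dr^h}$. For $f(r)=\lambda(r-\log r)$ one computes $g_3(r)\to -2\lambda<0$ as $r\to 1^+$, which kills the third candidate. Your plan contains no workable substitute for this computation.

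A second, smaller issue: you treat the positive direction --- that the flat and Simanca metrics \emph{are} projectively induced --- as a routine check. For the flat metric this is classical (Calabi), but Calabi's criterion requires verifying positivity of infinitely many coefficients, and for the Simanca metric this is a nontrivial theorem proved in \cite{LSZ} (Theorem 1.3), which the paper cites rather than reproves. Without that input your argument would only show that every candidate other than flat and Simanca fails, not that the surviving two actually satisfy the hypothesis of the Corollary.
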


The proof of Theorem \ref{mainteor} is  based on Feng's work \cite{feng} which classifies the radial metrics whose first and second coeffcients of TYZ expansion are constant functions.
Since the proof of  Feng's classification theorem given in \cite{feng} is quite technical and not  so easy to read we have provided in the next section a more readable  
(to the authors' opinion) proof. In the last section we prove Theorem \ref{mainteor} and Corollary \ref{mainteor2}.

\section{Radial metrics with constant $a_1$ e $a_2$}
\begin{theor}[Z. Feng \cite{feng}]\label{radiala2}
The only radial \K potentials defined on a complex domain of dimension $n$ which have constant first and second TYZ coefficient  are:
\begin{itemize}
\item[1.]  the Euclidean metric,\\
\item[2.] constant multiple of the hyperbolic metric defined on $\B^n_{\frac{1}{\lambda}}$,\\
\item[3.] constant multiple of the Fubini-Study metric.
\end{itemize}
Moreover, if $n=2$
\begin{itemize}
\item[4.] $|z|^2+\lambda\log |z|^2$ on $\C^2\setminus\{ 0\}$, \\
\item[5.] $\mu|z|^2-\lambda\log |z|^2$ on $\C^2\setminus\overline{\B}^2_{\frac{\lambda}{\mu}} $,\\
\item[6.] $-\mu \big(\log(1-\xi|z|^{2\zeta})+\frac{1-\zeta}{2}\log |z|^2\big)$  on 
${\B^2}_{(\frac{1}{\xi})^{\frac{1}{\zeta}}} \setminus\overline{\B}^2_{((\frac{1}{\xi})(\frac{1-\zeta}{1+\zeta}))^{\frac{1}{\zeta}}}$,\\
\item[7.] $-\mu \big(\log(1-\xi|z|^{2(\lambda+1)})-\frac{\lambda}{2}\log |z|^2\big)$  on 
 $\B^2_{(\frac{1}{\xi})^{\frac{1}{\lambda+1}}} \setminus\{ 0\}$,\\
\item[8.] $\mu \big(\log(1+\xi|z|^{-2(\lambda+1)})+\frac{2+\lambda}{2}\log |z|^2\big)$  on 
$\C^2\setminus{\B}^2_{((\frac{1}{\xi})(\frac{\lambda}{2+\lambda}))^{\frac{1}{\lambda+1}}}$,\\
\item[9.] $\mu \big(\log(1+|z|^{-2\zeta})+\frac{1+\zeta}{2}\log |z|^2\big)$  on $\C^2\setminus\{ 0\}$,\\
\item[10a.] $-\mu\big(\log (-\log |z|^2+\kappa)+\frac{\log |z|^2}{2}\big)$  on $\B^2_{e^{\kappa}}\setminus\overline{\B}^2_{e^{\kappa-2}} $,\\
\item[11a.] $-\mu\big(\log | \cos(\lambda\log |z|^2+\kappa)|+\frac{\log |z|^2}{2}\big)$  on $\B^2_{r_1(h,\kappa,\lambda)}\setminus\overline{\B}^2_{r_3(h,\kappa,\lambda)} $,\\
\end{itemize}
or if $n=1$
\begin{itemize}
\item[10b.] $-\mu\log |-\log |z|^2+\kappa|$  on $\C \setminus(\de \B_{e^{\kappa}}\cup\{ 0\})$,\\
\item[11b.] $-\mu\log |\cos(\lambda\log |z|^2+\kappa)|$, on $\B_{r_1(h,\kappa,\lambda)}\setminus\overline{\B}_{r_2(h,\kappa,\lambda)} $.\\
\end{itemize}
Where of $\B^n_r$ denotes the ball of radius $r$ in $\C^n$, $\de\B^n_r$ denotes the boundary of $\B^n_r$, $\overline{\B}^n_r=\B^n_r\cup\de\B^n_r$, $\mu,\lambda,\xi,\zeta\in\R^+,$ $0<\zeta<1,$ $\kappa\in\R,$ $h\in\Z,$ $r_1(h,\kappa)=$ $e^{\frac{1}{\lambda}(\frac{2h+1}{2}\pi-\kappa)},$
$r_2(h,\kappa,\lambda)=e^{\frac{1}{\lambda}(\frac{2h-1}{2}\pi-\kappa)}$ and
 $r_3(h,\kappa,\lambda)=e^{\frac{1}{\lambda}(h\pi+\arctan(\frac{1}{2\lambda})-\kappa)}.$  
\end{theor}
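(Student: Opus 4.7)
The plan is to translate the two hypotheses $a_1 = \mathrm{const}$ and $a_2 = \mathrm{const}$ into a pair of ordinary differential equations for the one-variable function $\Phi(u)$, $u = |z|^2$, integrate them, and sort the solutions into normal forms. By $U(n)$-invariance one has $g_{i\bar j} = \Phi'(u)\delta_{ij} + \Phi''(u)\bar z_i z_j$ and $\det(g_{i\bar j}) = (\Phi'(u))^{n-1}(\Phi'(u) + u\Phi''(u))$, so every curvature invariant reduces to a one-variable rational expression in $\Phi', \Phi'', \Phi''', \Phi''''$. A natural simplification is to work with $y(u) := u\Phi'(u)$, the moment map of the diagonal $S^1$-action on $\C^n$; it effectively lowers the order of the resulting system and converts the Kähler condition into a simple positivity statement for $y$ and $y'$.

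Using Lu's formulas (\ref{coefflu}), the first hypothesis reads $\rho(u) = \mathrm{const}$, a third-order ODE for $\Phi$. Once $\rho$ is constant, both $\Delta\rho = 0$ and $\rho^2$ is constant, so the second hypothesis collapses to $|R|^2 - 4|\mathrm{Ric}|^2 = \mathrm{const}$, which gives a second ODE of comparable order. I would then combine these into an overdetermined system for $y$, differentiate the $a_2$-equation once and eliminate the highest derivative via the cscK equation, thereby producing an algebraic compatibility relation that factors into a finite number of mutually exclusive branches. Each branch is a lower-order ODE whose integration introduces one or two constants $\mu,\lambda,\xi,\zeta,\kappa$; for $n \geq 3$ the branches that survive the compatibility step correspond precisely to the flat, hyperbolic, and Fubini--Study metrics (items~1--3), whereas for $n = 2$ and $n = 1$ certain $n$-dependent coefficients in $|R|^2 - 4|\mathrm{Ric}|^2$ degenerate and additional branches open up, producing items~4--11.

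The main obstacle is precisely this branching step in low dimensions: the factorization of the compatibility relation must be carried out carefully to ensure the list is exhaustive and that no solution is double-counted. Having identified a branch and integrated it to a closed-form potential such as $\mu|z|^2 - \lambda\log|z|^2$ or $-\mu\log|\cos(\lambda\log|z|^2 + \kappa)|$, one still has to solve the \K inequalities $\Phi'(u) > 0$ and $\Phi'(u) + u\Phi''(u) > 0$ to determine the maximal annular or punctured domain and the admissible parameter ranges; this is what fixes the radii $r_1, r_2, r_3$ in the statement. This last bookkeeping is what makes Feng's original argument technical; the improvement promised by the authors should consist in isolating the compatibility relation intrinsically before breaking into cases, so that each surviving branch receives a uniform treatment.
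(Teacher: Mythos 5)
Your overall skeleton (reduce to ODEs in the moment-map variable, note that constant $\rho$ collapses $a_2=\mathrm{const}$ to $|R|^2-4\abs{\Ric}^2=\mathrm{const}$, integrate the surviving branches and then impose $\Phi'>0$, $\Phi''>0$ to fix the domains) agrees with the paper, but the central mechanism is missing, and what you put in its place is not established. The paper does \emph{not} run a differentiate-and-eliminate compatibility analysis on an overdetermined system. The key move is a hodograph-type substitution: writing $t=\log|z|^2$, one sets $y=\Phi'(t)$ and regards $\psi(y)=\Phi''(t)$ as a function of $y$ itself. Under this change of variables the constant-scalar-curvature condition becomes the \emph{linear} second-order ODE $(y^{n-1}\psi)''=n(n-1)y^{n-2}+An(n+1)y^{n-1}$ (primes now in $y$), which integrates in closed form to $\psi(y)=Ay^2+y+By^{2-n}+Cy^{1-n}$. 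Substituting this explicit three-parameter family into the $a_2$ equation turns that equation into a Laurent-polynomial identity in $y$; the classification then comes from setting each coefficient to zero, which is where the dimension-dependent degenerations (the factors $(n-1)$ and $(n-2)$) force $B=C=0$ for $n>2$, only $C=0$ for $n=2$, and nothing for $n=1$. This is a system of algebraic equations in the integration constants $A,B,C$, not a factorization of a differential compatibility relation into branches; your proposed elimination scheme is left entirely unexecuted, and it is precisely the step whose feasibility you would need to demonstrate.

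A second, smaller gap: you do not explain where the transcendental potentials in items 6--11 come from. In the paper they arise because the surviving cases are exactly $\Phi''=a(\Phi')^2+b\Phi'+c$ (an autonomous Riccati-type equation in $t$ for $\Phi'$), and the three sign cases of the discriminant $\frac{c}{a}-\frac{b^2}{4a^2}$, together with the degenerate case $a=0$, produce respectively the $\log(1\pm\xi r^{2aD})$, $\log\abs{\log r+\kappa}$, $\log\abs{\cos(aD\log r+\kappa)}$ and $\xi r^{b}$ potentials. Saying that ``integration introduces one or two constants'' does not account for this case division, which is what generates most of the list. Without the explicit solution of the cscK equation and the subsequent Riccati integration, your argument cannot certify that the list is exhaustive.
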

\begin{proof}
Let $\Phi (\log r)$ be a \K potential defined on a radial complex domain of complex dimension $n$, where $r=|z_1|^2+\mathellipsis +|z_n|^2$.\\

Firstly, we classify \K metrics related to such kind of potentials which have constant scalar curvature (namely $a_1$ is constant). By definition, the metric tensor reads as
\begin{equation}\label{metric}
g_{i\bar j}=\frac{\de^2 \Phi (\log r)}{\de z_i\de\bar z_j}=\frac{\Phi''-\Phi' }{r^2}\bar z_i z_j+\frac{\Phi' }{r}\delta_{ij},
\end{equation}
where $\delta_{ij}$ is the Kronecker delta and $\Phi'$ represents the first derivative of $\Phi$ with respect to $$t=\log r.$$ Since Riemannian metrics are positive definite, $\Phi''$ needs to be a positive function. Hence we can consider the following substitutions
$$\begin{cases}
 y= \Phi'(t),\\
\psi(y)= \Phi''(t).
\end{cases}$$
From eq. (\ref{metric}), we easily get 
$$\det \left(g_{i\bar j}\right)=\frac{(\Phi')^{n-1}\Phi''}{r^{n}}.$$
By considering that
$$\frac{d}{dt}\big( (n-1)\log\Phi'+\log\Phi''\big)=(n-1)\frac{\Phi''}{\Phi'}+\frac{\Phi'''}{\Phi''}=(n-1)\frac{\psi}{y}+\psi',$$
we introduce the further substitution
$$\sigma(y)=\frac{(y^{n-1}\psi)'}{y^{n-1}}=(n-1)\frac{\psi(y)}{y}+\psi'(y),$$
and we compute the Ricci tensor's components:
\begin{equation}\label{ricci}
\Ric_{i\bar j}=-\frac{\de^2 \log\det \left(g_{i\bar j}\right)}{\de z_i\de\bar z_j}=\frac{-\sigma'\psi+\sigma-n }{r^2}\bar z_i z_j+\frac{n-\sigma }{r}\delta_{ij},
\end{equation}
to be in the position to represent the scalar curvature as a function of $y$:
$$\rho=g^{i\bar j}Ric_{i \bar j}= \frac{n(n-1)}{y}-\frac{(n-1)\sigma+\sigma'y}{y}.$$
Thus we reach our initial objective by solving the ODE obtained by imposing $\rho$ to be constant, namely
$$\frac{n(n-1)}{y}-\frac{(y^{n-1}\psi)''}{y^{n-1}}=-An(n+1),$$
whose solutions are
\begin{equation}\label{solution}
\psi(y)=Ay^2+y+\frac{B}{y^{n-2}}+\frac{C}{y^{n-1}},
\end{equation}
where $A,B,C \in \R$.\\

Now, we determinate which conditions have to be satisfied so that the previous metrics also verify the PDE 
\begin{equation}\label{eq}
a_2=K,
\end{equation}
where $K$ is a real constant.

We are going to represent the first term in the previous PDE (see eq. \ref{coefflu}) as a function of $y$, $\psi(y)$ and its derivatives, in order to convert it to an ODE.

Riemann tensor's components $R_{i\bar j k \bar l}$ are by definition equal to $\frac{\partial^2 g_{i\bar l}}{\partial z_k\partial\bar z_j}-g^{p\bar q}
\frac{\partial g_{i\bar p}}{\partial z_k}\frac{\partial g_{q\bar l}}{\partial\bar z_j}$. Therefore, the following derivatives (cf. eq. \ref{metric})
\begin{equation}\begin{array}{l}\label{gderivatives}
\frac{\partial g_{i\bar l}}{\partial z_k}= \frac{\Phi''-\Phi' }{r^2} (\delta_{kl}\bar z_i +\delta_{il}\bar z_k)+\frac{\Phi'''-3\Phi''+2\Phi' }{r^3} \bar z_i z_l\bar z_k\\
\frac{\partial^2 g_{i\bar l}}{\partial z_k\partial\bar z_j}=
\frac{\Phi''-\Phi' }{r^2} (\delta_{kl}\delta_{ij} +\delta_{il}\delta_{kj})+\frac{\Phi''''-6\Phi'''+11\Phi''-6\Phi' }{r^4} \bar z_i z_jz_l\bar z_k \\
\qquad \qquad +\frac{\Phi'''-3\Phi''+2\Phi' }{r^3} (\delta_{kj}\bar z_i z_l+\delta_{kl}z_j\bar z_i +\delta_{il}z_j\bar z_k+\delta_{ij} z_l\bar z_k)
\end{array}\end{equation}
allow us to state that the unique (up to consider tensor's symmetries) nonvanishing  components in $(z_1,0,\mathellipsis ,0)$ are equal to
\begin{equation}\begin{array}{l}\label{riemann}
R_{1\bar 1 1 \bar 1}=\frac{\psi''\psi^2}{r^2},\\
R_{1\bar 1 i \bar i}=\frac{\psi'y-\psi}{yr^2}\psi,\\
R_{i\bar i i \bar i}=2 R_{i\bar i j \bar j}=2\frac{\psi-y}{r^2},\\
\end{array}\end{equation}
where $i \neq j$ and $i,j\neq 1$. By taking into account the invariance of $|R|^2$ under the action of the unitary group, we can use Riemann tensor's components of formula (\ref{riemann}) and the metric tensor's components evaluated in $(z_1,0,\mathellipsis ,0)$ (cf. eq. \ref{metric}) to get the general formula
\begin{equation}\begin{array}{ll}\label{riemannmod}
|R|^2 &
=(g^{ 1\bar 1})^4 (R_{1\bar 1 1 \bar 1})^2 + 4(n-1)(g^{1 \bar 1})^2(g^{ i\bar i })^2(R_{1\bar 1 i \bar i})^2  \\  
& \qquad + (g^{i \bar i})^4\big( 4 \frac{(n-2)(n-1)}{2}(R_{i\bar i j \bar j})^2+ (n-1)(R_{i\bar i i \bar i})^2\big)  \\ 
& =(\psi'')^2+4(n-1)\big(\frac{\psi'y-\psi}{y^2}\big)^2+2n(n-1)\big(\frac{\psi-y}{y^2}\big)^2.
\end{array}\end{equation}
Similarly, we compute
\begin{equation}\begin{array}{ll}\label{riccimod}
|Ric|^2 &=
(g^{ 1\bar 1})^2 (Ric_{1\bar 1})^2+(n-1)(g^{ i\bar i})^2 (Ric_{i\bar i})^2\\  &=
(\sigma')^2+(n-1)\big(\frac{\sigma-n}{y} \big)^2.
\end{array}\end{equation}
Therefore the equation (\ref{eq}) is equivalent to
\begin{multline}
4(n-1)\Big(\frac{\psi'y-\psi}{y^2}\Big)^2+2n(n-1)\Big(\frac{\psi-y}{y^2}\Big)^2\\
-4\Big((\sigma')^2+(n-1)\Big(\frac{\sigma-n}{y}\big)\Big)
+(\psi'')^2  \\=  24K-3A^2n^2(n+1)^2.\nonumber
\end{multline}
By imposing in the previous ODE, $\psi$ to be equal to the solution (\ref{solution}), we get
\begin{equation}\begin{array}{l}\label{ode}
(n-1)\ \Big(
2BC n^2y^{- 2n - 1}(n + 1) + C^2ny^{- 2(n + 1)}(n + 1)(n + 2) \\
\ +B^2ny^{- 2 n}(n + 1)(n - 2) + A^2n(n + 1)(3n + 2)\Big)
-24K=0.\nonumber
\end{array}\end{equation}
Hence,  the previous equation is satisfied  if and only if  
\begin{equation}
\begin{cases}
C=B=0, \ K=A^2n(n+1)(n-1)(3n+2)/24 & \text{if }n\geq 2\\
C=0, \ K=2A^2&  \text{if }n= 2\\
K=0 &  \text{if }n= 1\\
\end{cases}\nonumber
\end{equation}
By definition of $\psi$ and $y$ and by renaming the constants for more convenience, one gets
that $a_2=K$ if and only if 
\begin{equation}\label{psi''}
\Phi''=
\begin{cases}
a(\Phi')^2 + \Phi' & \text{if }n\geq 2,\\
a(\Phi')^2+\Phi'+c &  \text{if }n= 2,\\
a(\Phi')^2+b\Phi'+c &  \text{if }n= 1.
\end{cases}
\end{equation}\\

In order to solve this equation
let us first  suppose $a\neq 0$. Therefore
\begin{equation}\label{initialode}
\Phi''(t)=a\left(\left(\Phi'+\frac{b}{2a}\right)^2+\frac{c}{a}-\frac{b^2}{4a^2}\right).
\end{equation}
We distinguish now three different cases.

\noindent
Case 1: $\frac{c}{a}-\frac{b^2}{4a^2}=-D^2$, where $D\in\R^+$. By solving this ODE, we get $$\left|\frac{\Phi'(t)+\frac{b}{2a}-D}{\Phi'(t)+\frac{b}{2a}+D}\right|=\xi e^{2aDt},$$
 where $\xi\in\R^+$. Thus we have to distinguish two further different possibilities. 
 
1a) $\Phi'+\frac{b}{2a}-D<0<\Phi'+\frac{b}{2a}+D$.
 
Hence $$\Phi'(t)=-\frac{\xi e^{2aDt}}{\xi e^{2aDt}+1}\left(D+\frac{b}{2a}\right)+\frac{D-\frac{b}{2a}}{\xi e^{2aDt}+1},$$ namely 
 $$\Phi(\log r)=-\frac{1}{a}\left(\log(1+\xi r^{2aD})+\left(\frac{b}{2}-Da\right)\log r\right). $$
  Since $\Phi''>0$, then $a<0$. If $n\geq 2$, also $\Phi'$ has to be positive by definition of \K potential.  This condition is trivially satisfied if $n>2$, while if $n=2$ it is equivalent to $(2aD+1)\xi r^{2aD}>2aD-1$. Moreover, if $n=1$, we recall that $\de\bar\de\log r=0$.\\

1b) $\Phi'+\frac{b}{2a}-D>0$ or $\Phi'+\frac{b}{2a}+D<0$. Therefore $$\Phi'(t)=-\frac{\xi e^{2aDt}}{\xi e^{2aDt}-1}\left(D+\frac{b}{2a}\right)-\frac{D-\frac{b}{2a}}{\xi e^{2aDt}-1},$$ namely
$$\Phi(\log r)=-\frac{1}{a}\left(\log|1-\xi r^{2aD}|+\left(\frac{b}{2}-Da\right)\log r\right). $$
 Since $\Phi''>0$, then $a>0$. If $n\geq 2$, also $\Phi'$ has to be positive.  This condition is satisfied if and only if $ \xi r^{2aD}<1$ and   $\xi r^{2aD}>\frac{1-2aD}{2aD+1}$ if $n=2$.

\noindent
Case 2: $\frac{c}{a}-\frac{b^2}{4a^2}=0$. This case may occur only if $n=1$ or $n=2$. By solving the initial ODE (\ref{initialode}), we get    $$\Phi(\log r)=-\frac{1}{a}\log|\log r+\kappa|-\frac{b}{2a}\log r,$$
where $\kappa\in\R$. Since $\Phi''>0$, then $a>0$. If $n=2$, also $\Phi'$ has to be positive, hence $-2-\kappa<\log r<-\kappa$ (in this case $b=1$). 

\noindent
Case 3: $\frac{c}{a}-\frac{b^2}{4a^2}=D^2$. Also this case may occur only if $n=1$ or $n=2$. By solving the initial ODE (\ref{initialode}), we get   
$$\Phi(\log r)= -\frac{1}{a}\log|\cos(aD\log r+\kappa)|-\frac{b}{2a}\log r,$$
 where $\kappa\in\R$. Since $\Phi''>0$, then $a>0$. If $n=2$, also $\Phi'$ has to be positive, hence $\frac{1}{aD}(\arctan(\frac{1}{2aD})+ h\pi -\kappa)<\log r<\frac{1}{aD}(\frac{\pi}{2}+h\pi-\kappa)$, where $h\in\Z$.\\

To conclude we have to consider that $a$ may also be equal to $0$, hence we have to solve  $$\Phi''(t)=b\Phi'+c.$$

If $b\neq 0$,  we get 
$$\Phi(\log r)=\frac{1}{b^2}\xi r^{b}- \frac{c}{b} \log r.$$
If $n\geq2$,  $\Phi'$ has to be positive, hence $\xi r^{b}>\frac{c}{b}$. This condition is always satisfied if $n>2$, since $c=0$. Moreover $\Phi''$ is always positive.

Instead, if $b=0$ (this case may occur only if $n=1$), we get $$\Phi(\log r)=\frac{c}{2} \left(\log r\right)^2+ \kappa \log r,$$
which is the Euclidean metric (already considered before).
\end{proof}

\section{Proof of Theorem \ref{mainteor} and Corollary \ref{mainteor2}}
In order to prove Theorem \ref{mainteor} we recall the expression of the term $a_3$ in \tyz expansion:
\begin{equation}\label{coeffa3}
\begin{array}
{l}
a_3(x)=\frac{1}{8}\Delta\Delta\rho +\frac{1}{24}{\rm div}{\rm div} (R, {\rm Ric})- \frac{1}{6}{\rm div}{\rm div} (\rho{\rm Ric})\\
\qquad \quad  +\frac{1}{48}\Delta (|R|^2-4|{\rm Ric} |^2+8\rho ^2)+
\frac{1}{48}\rho(\rho ^2- 4|{\rm Ric} |^2+ |R|^2)\\
\qquad \quad +\frac{1}{24}(\sigma_3 ({\rm Ric})- {\rm Ric} (R, R)-R({\rm Ric} ,{\rm Ric})),
\end{array}
\end{equation}
where we are using  the following notations (in local coordinates $z_1, \dots , z_n$):
$$\begin{array}{l}\label{values}
|D^{'}\rho|^2=  g^{j \bar i} \frac{\partial \rho}{\partial z_i} \frac{\partial \rho}{\partial \bar z_j},\\
|D^{'}\Ric|^2= g^{\alpha \bar i} g^{j \bar \beta} g^{\gamma \bar k}  \Ric_{i\bar j, k} \overline{\Ric_{\alpha \bar \beta, \gamma}} ,\\
|D^{'}R|^2 = g^{\alpha \bar i}g^{j \bar \beta}g^{\gamma \bar k}g^{l \bar \delta}g^{\epsilon \bar p} R_{i \bar j k \bar l, p} \overline{R_{\alpha \bar \beta \gamma \bar \delta, \epsilon}} ,\\
\div\div (\rho \Ric)=2|D^{'}\rho|^2+
g^{\beta \bar i} g^{j \bar \alpha} \Ric_{i\bar j}\frac{\partial^2 \rho}{\partial z_{\alpha} \partial \bar z_{\beta}}
+\rho\Delta\rho,\\
\div\div (R, \Ric)=
-g^{\beta \bar i} g^{j \bar \alpha} \Ric_{i\bar j}\frac{\partial^2 \rho}{\partial z_{\alpha} \partial \bar z_{\beta}}
-2|D^{'}\Ric|^2\\
\qquad\qquad\qquad + g^{\alpha \bar i} g^{j \bar \beta} g^{\gamma \bar k} g^{l \bar \delta} R_{i\bar j,k\bar l}R_{\beta \bar \alpha \delta \bar \gamma}-
R(\Ric, \Ric)-\sigma_3(\Ric),\\
R(\Ric, \Ric)= g^{\alpha \bar i}g^{j \bar \beta}g^{\gamma \bar k}g^{l \bar \delta} R_{i\bar jk\bar l}\Ric_{\beta \bar \alpha}\Ric_{\delta \bar \gamma},\\
\Ric(R, R) = g^{\alpha \bar i}g^{j \bar \beta}g^{\gamma \bar k}g^{\delta \bar p}g^{q \bar \epsilon} \Ric_{i\bar j}R_{\beta \bar \gamma p \bar q}R_{k \bar \alpha \epsilon \bar \delta},\\
\sigma_3 (\Ric)=g^{\delta \bar i}g^{j \bar \alpha}g^{\beta \bar \gamma} \Ric_{i\bar j}\Ric_{\alpha \bar \beta}\Ric_{\gamma \bar \delta},\\
\end{array}$$
where the $g^{j \bar i}$'s denote the entries of the inverse matrix of the metric (i.e. $g_{k \bar i} g^{j \bar i} = \delta_{kj}$) and  \lq\lq\  ,p'' represents the covariant derivative in the direction $\frac{\partial}{\partial z_p}$ and we are using the summation convention for repeated indices.
 The reader is also referred to  \cite{loianal} and \cite{loismooth}  for a  recursive formula  for the coefficients $a_j$'s  and  an alternative computation of  $a_j$ for $j\leq 3$ using Calabi's diastasis function (see also  \cite{xu1}  for a graph-theoretic interpretation of this recursive formula). Moreover note that, given any K\"ahler manifold $(M, g)$ it makes sense to call the $a_j$'s Tian-Yau-Zelditch coefficients, regardless of the existence of  \tyz expansion.

\begin{proof}[Proof of Theorem \ref{mainteor}]

The same arguments about unitary invariance of terms of $a_2$ shown in the proof of Theor. \ref{radiala2} hold true also in this case for the term $a_3$. Therefore, we can easily compute by using  eq. (\ref{riemann}) and (\ref{metric}) evaluated at $(z_1,0,\mathellipsis ,0)$ the following terms:
\begin{equation}\begin{array}{l}\label{ricr}
{\rm Ric} (R, R)\\
\quad=(g^{ 1\bar 1})^2 Ric_{1\bar 1}\big( (g^{ 1\bar 1})^3 (R_{1\bar 1 1 \bar 1})^2 + 2 (n-1)g^{ 1\bar 1}(g^{ i\bar i})^2 (R_{1\bar 1 i \bar i})^2 \big) \\
\quad\quad\quad  + (g^{ i\bar i})^5 Ric_{i\bar i} \big( (n-1) (R_{i\bar i i \bar i})^2 + 2(n-1)(n-2)(R_{i\bar i j \bar j})^2 \big)\\
\quad\quad\quad   +2 (n-1)(g^{ 1\bar 1})^2(g^{ i\bar i})^3 Ric_{i\bar i}(R_{1\bar 1 i \bar i})^2  \\
 \quad =2(n-1)\frac{n-\sigma}{y^5}\Big(n(\psi-y)^2+(\psi'y-\psi)^2\Big)\\
\quad\quad\quad  -\sigma'\Big( (\psi'' )^2+2(n-1)\frac{(\psi'y-\psi )^2}{y^4}\Big);
\end{array}\end{equation}
\begin{equation}\begin{array}{l}\label{rric}
R({\rm Ric} ,{\rm Ric})\\
\quad =(g^{ 1\bar 1})^4 (Ric_{1\bar 1})^2 R_{1\bar 1 1 \bar 1}+2(n-1)(g^{ 1\bar 1})^2 (g^{ i\bar i})^2 Ric_{1\bar 1}Ric_{i\bar i} R_{1\bar 1 i \bar i}\\
\quad \quad\quad  + (g^{ i\bar i})^4 (Ric_{i\bar i})^2 ( (n-1)R_{i\bar i i \bar i}+(n-1)(n-2)R_{i\bar i j \bar j})\\
\quad =(\sigma')^2\psi''-2(n-1)\frac{(\psi'y-\psi)(n-\sigma)\sigma'}{y^3}+n(n-1)\frac{(\psi-y)(n-\sigma)^2}{y^4}.
\end{array}\end{equation}

By using formulas (\ref{riemannmod}) and (\ref{riccimod}), we define
$$\chi (y)= |R|^2-4|{\rm Ric} |^2,$$
to get 
\begin{multline}\label{lapl}
\Delta (|R|^2-4|{\rm Ric} |^2)=g^{i \bar j}\frac{\de^2 \chi}{\de z_j\de\bar z_i}\\
=g^{i \bar j}\big( \frac{\chi'\psi}{r}\delta_{ji}+\frac{\chi''\psi+\chi'\psi'-\chi'}{r^2}\psi\bar z_j z_i \big)=
(\chi'\psi)'+\frac{n-1}{y}\chi'\psi.
\end{multline}

Since Christoffel's symbols $\Gamma_{ki}^p$ are equal to $g^{p\bar q} \frac{\partial g_{i\bar q}}{\partial z_k}$, we easily deduce from (\ref{gderivatives}) that they are all equal to zero when evaluated at $(z_1,0,\mathellipsis ,0)$ except for:
\begin{equation}\begin{array}{l}\label{christoffel}
\Gamma_{11}^1= (\frac{\Phi'''-\Phi''}{r^2})(\frac{r}{\Phi''})\bar z_1=\frac{\psi'-1}{r}\bar z_1,\\
\Gamma_{1i}^i= (\frac{\Phi''-\Phi'}{r^2})(\frac{r}{\Phi'})\bar z_1=\frac{\psi-y}{yr}\bar z_1.\nonumber
\end{array}\end{equation}
We also compute that the unique first derivatives of Ricci tensor's components different from zero in $(z_1,0,\mathellipsis ,0)$ (cf. (\ref{ricci})) are:
\begin{equation}\begin{array}{l}\label{riccider1}
\frac{\de }{\de\bar z_1}Ric_{1\bar 1}=\frac{\sigma'\psi-(\sigma'\psi)'\psi}{r^2}z_1,\\
\frac{\de }{\de\bar z_1}Ric_{i\bar i}=\frac{\de }{\de\bar z_i}Ric_{i\bar 1}=\frac{\sigma-\sigma'\psi-n}{r^2}z_1,\nonumber
\end{array}\end{equation}
and we use them to evaluate Ricci tensor's first covariant derivatives at $(z_1,0,\mathellipsis ,0)$, which are defined as
$Ric_{i\bar j,k}=\frac{\de}{\de z_k} Ric_{i\bar j}- Ric_{p\bar j}\Gamma_{ki}^p$, and we get
\begin{equation}\begin{array}{ll}\label{ric1}
|D^{'}\Ric|^2 & = (g^{1\bar 1})^3|Ric_{1\bar 1,1}|^2+2(n-1)g^{1\bar 1}(g^{i\bar i})^2|Ric_{i\bar i,1}|^2\\
& =(\sigma'')^2\psi+2(n-1)\frac{\psi}{y^2}(\sigma'+\frac{n-\sigma}{y})^2.
\end{array}\end{equation}

Because some Riemann tensor's components in $(z_1,0,\mathellipsis ,0)$ are equal to zero (cf. eq. (\ref{riemann})), we need to compute just some Ricci tensor's second covariant derivatives, of which we recall the definition: 
$Ric_{i\bar j,k\bar l}=\partial_{\bar l}\partial_k Ric_{i\bar j}+\Gamma_{ki}^q\Gamma_{\bar l\bar j}^{\bar p}Ric_{q\bar p}- \Gamma_{ki}^p\partial_{\bar l}Ric_{p\bar j} -\partial_{\bar l}\Gamma_{ki}^p Ric_{p\bar j} -\Gamma_{\bar l\bar j}^{\bar p}\partial_kRic_{i\bar p}$.
 In particular, we preliminary  evaluate at $(z_1,0,\mathellipsis ,0)$ the following derivatives ($\partial_{\bar l}\Gamma_{ki}^p=g^{p\bar q} \frac{\partial^2 g_{i\bar q}}{\partial \bar z_l\partial z_k} - g^{p\bar \gamma}g^{\alpha\bar q} \frac{\partial g_{\alpha\bar \gamma}}{\partial \bar z_l} \frac{\partial g_{i\bar q}}{\partial z_k}$)
\begin{equation}\begin{array}{l}\label{chrder}
\partial_{\bar 1}\Gamma_{11}^1=\frac{\psi''\psi}{r},\\
\partial_{\bar i}\Gamma_{i1}^1=\frac{\psi'y-\psi}{y r},\\
\partial_{\bar 1}\Gamma_{i1}^i=\frac{\psi'y-\psi}{y^2r}\psi,\\
\partial_{\bar i}\Gamma_{ii}^i=2\partial_{\bar j}\Gamma_{ji}^i=2\frac{\psi-y}{yr}\nonumber
\end{array}\end{equation}
and
\begin{equation}\begin{array}{l}\label{riccider2}
\frac{\de^2 }{\de z_1\de\bar z_1}Ric_{1\bar 1}=\frac{-((\sigma'\psi)'\psi)'\psi+2(\sigma'\psi)'\psi-\sigma'\psi}{r^2},\\
\frac{\de^2 }{\de z_1\de\bar z_1}Ric_{i\bar i}=\frac{\de^2 }{\de z_i\de\bar z_1}Ric_{1\bar i}=\frac{-(\sigma'\psi)'\psi+2\sigma'\psi-\sigma+n}{r^2},\\
\frac{\de^2 }{\de z_i\de\bar z_i}Ric_{i\bar i}=2\frac{\de^2 }{\de z_j\de\bar z_j}Ric_{i\bar i}=2\frac{\de^2 }{\de z_j\de\bar z_i}Ric_{i\bar j}=2\frac{-\sigma'\psi+\sigma-n}{r^2},\nonumber
\end{array}\end{equation}
 to  be in the position to express 
\begin{equation}\begin{array}{l}\label{ric2}
(g^{1 \bar 1})^4 R_{1\bar 1 1\bar 1} Ric_{1\bar 1, 1\bar 1} + 
2 (n-1)(g^{1 \bar 1})^2 (g^{i \bar i})^2 R_{1\bar 1 i\bar i} (Ric_{1\bar 1, i\bar i} +Ric_{i\bar i, 1\bar 1})\\
\qquad\quad  +(g^{i \bar i})^4 \big( (n-1) R_{i\bar i i\bar i} Ric_{i\bar i, i\bar i} +2(n-1)(n-2) R_{i\bar i j\bar j} Ric_{i\bar i, j\bar j}\big)\\
 =\frac{\psi''}{\psi}\Big(-((\sigma'\psi)'\psi)'-(\psi')^2\sigma'+2\psi'(\sigma'\psi)'+\sigma'\psi''\psi\Big)\\
 \qquad\quad +2(n-1)\frac{\psi'y-\psi}{ y^3}\left(-2(\sigma'\psi)'+2\frac{\psi}{y^2}(n-\sigma)+ 4\frac{\psi}{y}\sigma'\right)\\ 
\qquad\quad +2(n-1)\frac{\psi'y-\psi}{ y^3} \left( \psi'-\frac{\psi}{y} \right)\left(\sigma'+\frac{\sigma-n}{y}\right) \\
\qquad\quad +2n\frac{\psi}{y^5}(n-1)(\psi-y)(\sigma-n-\sigma'y)
\end{array}\end{equation}
as a function of $y$, $\psi$ and its derivatives.
 
 Therefore, thanks to the formulas (\ref{riemannmod}), (\ref{riccimod}), (\ref{rric}), (\ref{ricr}), (\ref{lapl}), (\ref{ric1}) and (\ref{ric2}), using (\ref{solution}), we convert the PDE 
\begin{equation}\label{eq2}
a_3=K,
\end{equation}
where $K$ is a real constant, to the identity:
\\
\noindent 
$n(n-1)(n+1)\Big(
A^3ny^{3n + 3}(n + 1)(n - 2) +AB^2ny^{n+3} (2-n) (n+3) -2Bny^{n + 2}\big(AC(n + 1)(n + 4) + B(n^2 - 4)\big) -Cy^{n + 1}(n+1) (n+2) \big(AC(n+6) +4Bn\big) -2C^2y^n(n + 1)(n + 2)(n + 3) +2B^3ny^3 (2-n) (2n+1) -6B^2Cn^2y^2(2n + 1) - 6BC^2y(n + 1)(2n^2 + 3n + 2) - 2C^3(n + 1)(n + 2)(2n + 3)
\Big)=-48 Ky^{3n+3}.
$\\

Therefore, if $n=1$ and $K=0$ the previous equation holds true independently from $A$, $B$ and $C$.
If $n\neq 1$, $C$ needs to be equal to $0$. By putting $C=0$ the  equality
is satisfied if $K=0$ and $n=2$ or $n>2$, $B=0$ and $K=-\frac{A^3}{48} n^2(n+1)^2(n-1)(n-2)$.

To sum up, we have just proved that $a_3=K$  if and only if  
$$\psi(y)=
\begin{cases}
Ay^2 + y & \text{if }n\geq 2,\\
Ay^2+ y+B &  \text{if }n= 2,\\
Ay^2+ By+C &  \text{if }n= 1.
\end{cases}$$
By recalling that $y=\Phi'$ and eq. \eqref{psi''}, we get that $a_2$ is constant if and only if $a_3=K$ is constant, which concludes the proof of Theorem \ref{mainteor}.
\end{proof}

In order to prove Corollary \ref{mainteor2} we need the following result, proved by the authors in \cite{LSZ} (Lemma 2.2 and Corollary 2.3).

\begin{lem}\label{projind}
Let $n\geq 2$ and  $p=(s,0,\mathellipsis ,0)$, with $s\in\R$, $s\neq 0$, be a point of the complex domain $U\subset\C^n\setminus\{0\}$ on which is defined a radial metric $g$  with radial  \K potential $\Phi:U\rightarrow \R$ and corresponding diastasis  $D_p:U\rightarrow \R$.
Let $f:\tilde U \rightarrow \R$ defined by $f (r)=\Phi (z),\  z=(z_1, \dots , z_n)$ where $\tilde U=\{r=|z|^2=|z_1|^2+\cdots +|z_n|^2 \ | \ z\in U\}$ and, for  $h\in\N$,  let $g_h: \tilde U\rightarrow \R$ given by:
\begin{equation}\label{gh}
g_h(r)=\frac{d^{h}e^{f(r)}}{dr^{h}}e^{-f(r)}.
\end{equation}
If there exists  $r \in\tilde U$ and $h\in\N$ such that  the function given by (\ref{gh}) is negative, namely $g_h(r)<0$, then 
the metric $g$ is not projectively induced.
\end{lem}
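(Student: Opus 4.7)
The plan is to extract the sign of $g_h(s^2)$ from a single Taylor coefficient of the polarized exponential of the Kähler potential, exploiting Calabi's sum--of--squares characterization of projective induction at a multi--index which is rendered trivial by the vanishing of $p_2$.

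Assuming by contradiction that $g$ is projectively induced, I would first produce, on a simply connected neighborhood $V\subset U$ of $p$, a representation
$$e^{\Phi(z)}=\sum_{j}|\psi_j(z)|^2$$
with holomorphic $\psi_j:V\to\C$, obtained by pulling back the affine lift of $g_{FS}$ from a chart containing $\varphi(p)$ and absorbing the pluriharmonic ambiguity of the potential as $|e^{H/2}|^2$ for a local holomorphic $H$. Since $\Phi(z)=f(|z|^2)$ is real--analytic, this polarizes to the identity
$$e^{f(\langle z,w\rangle)}=\sum_j \psi_j(z)\,\overline{\psi_j(w)},$$
where $\langle z,w\rangle:=\sum_i z_i\bar w_i$, holding as an equality of holomorphic functions of $(z,\bar w)$ on a bidisk around $(p,p)$.

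The heart of the proof is to apply $\partial^h_{z_2}\partial^h_{\bar w_2}$ to both sides and evaluate at $z=w=p$. On the left, iterating $\partial\langle z,w\rangle/\partial z_2=\bar w_2$ gives
$$\partial^h_{z_2}\,e^{f(\langle z,w\rangle)}=\bar w_2^{\,h}\,g_h(\langle z,w\rangle)\,e^{f(\langle z,w\rangle)};$$
applying $\partial^h_{\bar w_2}$ by Leibniz, every term vanishes at $\bar w_2=p_2=0$ except the unique one in which all $h$ derivatives fall on the factor $\bar w_2^{\,h}$, producing $h!\,g_h(s^2)\,e^{f(s^2)}$. On the right, the same differentiation yields $\sum_j\bigl|\partial^h_{z_2}\psi_j(p)\bigr|^2\geq 0$. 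Dividing by $h!\,e^{f(s^2)}>0$ gives $g_h(s^2)\geq 0$. Since $g$ is radial, $U$ is $U(n)$--invariant and every $r\in\tilde U$ equals $s^2$ for some $(s,0,\ldots,0)\in U$ after a unitary rotation, so $g_h(r)\geq 0$ for all $r\in\tilde U$ and all $h\in\N$, which is the contrapositive of the lemma.

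The main obstacle is arranging the Leibniz collapse so that precisely one Taylor coefficient of $e^{f(\langle z,w\rangle)}$ is isolated and identified with $g_h(s^2)$: the multi--index $(0,h,0,\ldots,0)$ works exactly because $p_2=0$ and $n\geq 2$, which is why the hypothesis $n\geq 2$ is essential. In one complex dimension this selective differentiation is unavailable, and a different (more delicate) analysis of the full Calabi matrix would be required.
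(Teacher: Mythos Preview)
The paper does not prove this lemma in the text; it is quoted from the authors' companion paper \cite{LSZ} (Lemma~2.2 and Corollary~2.3 there), so there is no in--text argument to compare against directly. Your argument is correct and is precisely Calabi's criterion specialized to a radial potential: polarizing $e^{f(|z|^2)}=\sum_j|\psi_j(z)|^2$ to $e^{f(\langle z,w\rangle)}=\sum_j\psi_j(z)\overline{\psi_j(w)}$ and applying $\partial_{z_2}^h\partial_{\bar w_2}^h$ at $z=w=p=(s,0,\dots,0)$ isolates $h!\,g_h(s^2)\,e^{f(s^2)}$ on the left (your Leibniz collapse is exactly right, and this is where $n\geq 2$ and $p_2=0$ are used) and the nonnegative quantity $\sum_j|\partial_{z_2}^h\psi_j(p)|^2$ on the right. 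The appearance of the diastasis $D_p$ in the statement suggests that the proof in \cite{LSZ} is phrased in terms of $e^{D_p}$ rather than $e^{\Phi}$; since these differ by the modulus squared of a nonvanishing holomorphic factor, the two formulations are equivalent and your route is arguably the more direct one.

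Two small points worth tightening. First, the claim that ``$U$ is $U(n)$--invariant'' is not among the stated hypotheses; strictly, your computation gives $g_h(s^2)\geq 0$ for the given base point $p=(s,0,\dots,0)$, and one then lets $p$ range over all such points in $U$ to cover the relevant values of $r$ (which suffices for every application in the paper, where the domains are rotationally invariant). Second, when $N=\infty$ the termwise differentiation of the polarized series $\sum_j\psi_j(z)\overline{\psi_j(w)}$ requires locally uniform convergence on a bidisk around $(p,p)$; this is standard from Calabi's theory but deserves a sentence.
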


\begin{proof}[Proof of Corollary \ref{mainteor2}] Let us first recall that the {\it Simanca metric} is the K\"ahler metric $g_S$ on $\C^2 \setminus \{ 0 \}$ defined by the K\"ahler potential

\begin{equation}\label{simancapiu2}
|z|^2+ \log |z|^2
\end{equation}

This metric, used by Simanca in \cite{simanca} to construct constant scalar curvature metrics on blowups, was studied by the authors in \cite{LSZ} (Theorem 1.3), where it was proved that it is projectively induced.

Now, Theorem \ref{mainteor} implies that that the Simanca metric, the flat metric on $\C^n$ and the metric on $\{ z \in \C^2 \ | \ |z|^2 > 1 \}$ given by the potential

\begin{equation}\label{simancameno2}
|z|^2- \log |z|^2
\end{equation}
are, up to homothety, the only radial  K\"ahler metrics $g$ with $a_1 = a_3= 0$.
Indeed, more precisely, by Theorem \ref{mainteor}, if $a_1 = a_3= 0$ then $g$ must be one of the metrics of the list in the statement of Theorem \ref{radiala2}. In particular, since $a_1 = 0$ we are in the case $\Phi'' = b \Phi' + c$ (being $\Phi$ a Kahler potential of $g$) considered in the proof at the end of Section 2: after an easy integration of this ODE, as seen above, one gets the Euclidean metric and the metrics given by $4.$ and $5.$ of the list in the statement of Theorem \ref{radiala2}, which, respectively after the change of variables $w = \frac{z}{\sqrt{\lambda}}$ and $w = \sqrt{\frac{\mu}{\lambda}} z$, proves our claim.

Since it is well-known and easily verified that  the Euclidean metric  is projectively induced (see e.g. \cite{Cal}),  in order to conclude the proof of Theorem \ref{mainteor2} we need  to show that the \K metrics whose \K forms  are given by  $\omega=\lambda\frac{i}{2}\partial\bar\partial (|z|^2- \log |z|^2)$ are not projectively induced, for any $\lambda>0$.

Set  $f(r) = \lambda(r - \log r)$, $r=|z|^2$.
Thus $e^{f(r)} = e^{\lambda r} r^{- \lambda}$ and $g_3(r)$ in the previous Lemma reads as: 
\begin{equation}\label{g3}
g_3(r) = \frac{d e^{f}}{dr^3}e^{-f} =\lambda r^{ - 3}[\lambda^2 r^3 - 3 \lambda^2 r^2 + 3 \lambda (\lambda + 1)r -(\lambda+1)(\lambda+2)].
\end{equation}

Since the metrics are defined on $\{ z \in \C^2 \ | \ |z|^2 > 1 \}$, we can let $r$ tend to $1$ in (\ref{g3}) and get
$$\lim_{r \rightarrow 1^+} g_3(r) = \lambda[\lambda^2 - 3 \lambda^2  + 3 \lambda^2 + 3 \lambda - \lambda^2 - 3 \lambda - 2] = -2 \lambda < 0$$
which, by Lemma \ref{projind} proves our claim and  concludes the proof of Corollary \ref{mainteor2}.
\end{proof}

\end{document}